\numberwithin{equation}{section}
\numberwithin{figure}{section}
\theoremstyle{plain}
\newtheorem{thm}{\protect\theoremname}[section]
\theoremstyle{definition}
\newtheorem{example}[thm]{\protect\examplename}
\theoremstyle{plain}
\newtheorem{cor}[thm]{\protect\corollaryname}
\theoremstyle{remark}
\newtheorem{rem}[thm]{\protect\remarkname}
\theoremstyle{plain}
\newtheorem{lem}[thm]{\protect\lemmaname}
\theoremstyle{remark}
\newtheorem*{claim*}{\protect\claimname}
\theoremstyle{definition}
\newtheorem{problem}[thm]{\protect\problemname}
\newcommand{\ns}[1]{\prescript{\ast}{}{#1}}
\newcommand{\INF}{\operatorname{INF}}
\newcommand{\IPC}{\operatorname{IPC}}
\newcommand{\Ends}{\operatorname{Ends}}
\newcommand{\Cay}{\Gamma}
\newcommand{\sgn}{\operatorname{sgn}}
\newcommand{\pCoarse}{\mathbf{Coarse}_{\ast}}
\newcommand{\id}{\operatorname{id}}
\newcommand{\pMetr}{\mathbf{Metr}_{\ast}}
\newcommand{\Sets}{\mathbf{Sets}}
\newcommand{\To}{\mathrel{\Rightarrow}}
\providecommand{\claimname}{Claim}
\providecommand{\corollaryname}{Corollary}
\providecommand{\examplename}{Example}
\providecommand{\lemmaname}{Lemma}
\providecommand{\problemname}{Problem}
\providecommand{\remarkname}{Remark}
\providecommand{\theoremname}{Theorem}
\begin{document}
\title{Sequential ends and nonstandard infinite boundaries of coarse spaces}
\thanks{The presentation of this work was supported by the Research Institute
for Mathematical Sciences, an International Joint Usage/Research Center
located in Kyoto University.}
\author{Takuma Imamura}
\thanks{The author was supported by the Morikazu Ishihara (Shikata) Research
Encouragement Fund and by JST ERATO HASUO Metamathematics for Systems
Design Project (No. JPMJER1603).}
\address{Research Institute for Mathematical Sciences\\
Kyoto University\\
Kitashirakawa Oiwake-cho, Sakyo-ku, Kyoto 606-8502, JAPAN}
\email{timamura@kurims.kyoto-u.ac.jp}
\begin{abstract}
This paper is an addendum to the author's previous paper \citep{Ima20a}.
\citet{MSM10} introduced a functor $\sigma\colon\pCoarse\to\Sets$,
where $\pCoarse$ is the category of pointed coarse spaces and coarse
maps. \citet{DLT13} introduced a functor $\varepsilon\colon\pCoarse\to\Sets$,
and proved that $\varepsilon$ coincides with $\sigma$ on $\pMetr$
(the full subcategory of metrisable spaces). Using techniques of nonstandard
analysis, the author in \citep{Ima20a} provided a functor $\iota\colon\mathscr{C}\subseteq\pCoarse\to\Sets$,
where $\mathscr{C}$ is an arbitrary small full subcategory, and a
natural transformation $\omega\colon\sigma\restriction\mathscr{C}\To\iota$.
The surjectivity of $\omega$ has been proved for all proper geodesic
metrisable spaces, while the injectivity has remained open. In this
note, we first pointed out that $\omega$ is the composition of two
natural transformations $\varphi\restriction\mathscr{C}\colon\sigma\restriction\mathscr{C}\To\varepsilon\restriction\mathscr{C}$
and $\omega'\colon\varepsilon\restriction\mathscr{C}\To\iota$, and
then show that $\omega'$ is injective for all spaces in $\mathscr{C}$.
As a corollary, $\omega$ is injective for all metrisable spaces in
$\mathscr{C}$. This partially answers some of the problems posed
in \citep{Ima20a}.
\end{abstract}

\subjclass[2020]{51F30, 54J05 (Primary), 20F65, 40A05 (Secondary)}
\maketitle

\section{Introduction}

Small-scale topology can be considered as the study of infinitesimal
structures of spaces, maps, homotopies, and so forth from the point
of view of nonstandard analysis. On the other hand, large-scale topology
is of infinite (or finite) structures (cf. \citet[p. 7]{PZ07}). The
notion of a coarse space introduced by \citet{Roe03} plays a central
role in large-scale topology, as the notion of a uniform space does
in small-scale topology. Recall that a \emph{coarse structure} on
a set $X$ is an ideal $\mathcal{C}_{X}$ of the poset $\mathcal{P}\left(X\times X\right)$
(with respect to the inclusion) that fulfills the following axioms:
\begin{enumerate}
\item $\Delta_{X}:=\set{\left(x,x\right)|x\in X}\in\mathcal{C}_{X}$;
\item $E\circ F:=\set{\left(x,y\right)|\left(x,z\right)\in E\text{ and }\left(z,y\right)\in F\text{ for some }z\in X}\in\mathcal{C}_{X}$
for all $E,F\in\mathcal{C}_{X}$;
\item $E^{-1}:=\set{\left(x,y\right)|\left(y,x\right)\in E}\in\mathcal{C}_{X}$
for all $E\in\mathcal{C}_{X}$.
\end{enumerate}
The set $X$ together with the coarse structure $\mathcal{C}_{X}$
is called a \emph{coarse space}. The elements of $\mathcal{C}_{X}$
are called \emph{controlled sets} or \emph{entourages}. A subset $B$
of $X$ is said to be \emph{bounded} if $B\times B$ is a controlled
set.
\begin{example}
Let $d_{X}\colon X\times X\to\mathbb{R}_{\geq0}\cup\set{+\infty}$
be a (generalised) metric on a set $X$. The family
\[
\mathcal{C}_{X}:=\set{E\subseteq X\times X|\sup d_{X}\left(E\right)<+\infty}
\]
forms a coarse structure on $X$, and is called the \emph{bounded
coarse structure} induced by $d_{X}$. A subset of $X$ is bounded
with respect to $\mathcal{C}_{X}$ if and only if it is bounded with
respect to $d_{X}$. Throughout the paper, we assume that every metric
space is endowed with the bounded coarse structure.
\end{example}

Let $X_{i}\ \left(i=0,1\right)$ be coarse spaces. A map $f\colon X_{0}\to X_{1}$
is said to be
\begin{enumerate}
\item \emph{proper} if $f^{-1}\left(B\right)$ is bounded in $X_{0}$ for
all bounded subsets $B$ of $X_{1}$;
\item \emph{bornologous} if $\left(f\times f\right)\left(E\right):=\set{\left(f\left(x\right),f\left(y\right)\right)|\left(x,y\right)\in E}\in\mathcal{C}_{Y}$
for all $E\in\mathcal{C}_{X}$.
\item \emph{coarse} if it is proper and bornologous.
\end{enumerate}
Denote by $\pCoarse$ the category of pointed coarse spaces and basepoint-preserving
coarse maps; and by $\pMetr$ its full subcategory of metrisable spaces.

Coarse maps $f,g\colon X_{0}\to X_{1}$ are said to be \emph{close}
if $\left(f\times g\right)\left(\Delta_{X_{0}}\right)=\set{\left(f\left(x\right),g\left(x\right)\right)|x\in X}\in\mathcal{C}_{X_{1}}$.
A coarse map $f\colon X_{0}\to X_{1}$ is called a \emph{coarse equivalence}
if there exists a coarse map (called a coarse inverse) $g\colon X_{1}\to X_{0}$
such that $f\circ g$ and $g\circ f$ are close to the identity maps
$\id_{X_{1}}$ and $\id_{X_{0}}$, respectively. The closeness relation
of (basepoint-preserving) coarse maps gives a congruence on the category
$\pCoarse$.

\subsection{Invariant $\sigma$}

\citet{MSM10} and \citet{DLW11} introduced a set-valued coarse invariant
$\sigma\left(X,\xi\right)$ of a pointed \emph{metric} space $\left(X,\xi\right)$.
Recall the simplified definition given by \citet{DLT13}. Let $\left(X,\xi\right)$
be a pointed \emph{coarse} space. (This generalisation to coarse spaces
by \citep{Ima20a} is merely a rewriting of the original definition
in terms of coarse structures.) A coarse map $s\colon\left(\mathbb{N},0\right)\to\left(X,\xi\right)$
is called a \emph{coarse sequence} in $\left(X,\xi\right)$. Notice
that a coarse map is precisely a sequence $\set{s\left(i\right)}_{i\in\mathbb{N}}$
in $X$ with the following properties:
\begin{description}
\item [{properness}] the sequence $\set{s\left(i\right)}_{i\in\mathbb{N}}$
diverges from $\xi$ to infinity, i.e., for all bounded subsets $B\ni\xi$
of $X$, there is an $N\in\mathbb{N}$ such that $\set{s\left(i\right)}_{i\geq N}$
is contained in $X\setminus B$;
\item [{bornologousness}] there is an $E\in\mathcal{C}_{X}$ such that
$\left(s\left(i\right),s\left(i+1\right)\right)\in E$ for all $i\in\mathbb{N}$.
\end{description}
Denote the set of all coarse sequences in $\left(X,\xi\right)$ by
$S\left(X,\xi\right)$. Given $s,t\in S\left(X,\xi\right)$, we say
that $s$ and $t$ are \emph{$\sigma$-equivalent} ($s\equiv_{X,\xi}^{\sigma}t$)
if there exists a finite sequence $\set{s_{i}}_{i\leq n}$ in $S\left(X,\xi\right)$
such that $s_{0}=s$, $s_{n}=t$, and either $s_{i}$ or $s_{i+1}$
is a subsequence to the other. The quotient set
\[
\sigma\left(X,\xi\right):=S\left(X,\xi\right)/\equiv_{X,\xi}^{\sigma}
\]
can be extended to a functor $\sigma\colon\pCoarse\to\Sets$. More
precisely, the morphism part is given by
\[
\sigma f\left[s\right]_{\equiv_{X,\xi}^{\sigma}}:=\left[f\circ s\right]_{\equiv_{Y,\eta}^{\sigma}}.
\]
Moreover, $\sigma$ is invariant under coarse equivalences (relative
to the base points), i.e., if two coarse maps $f,g\colon\left(X,\xi\right)\to\left(Y,\eta\right)$
are close, then $\sigma f=\sigma g$. See \citep{Ima20b} for the
systematic proofs of the functoriality and the coarse invariance of
$\sigma$.

The calculation of the invariant $\sigma\left(X,\xi\right)$ is quite
hard because of the difficulty of the decision of $s\equiv_{X,\xi}^{\sigma}t$.
On the other hand, the coarse invariants $\varepsilon\left(X,\xi\right)$
and $\iota\left(X,\xi\right)$ defined below can easily and intuitively
be calculated.

\subsection{Invariant $\varepsilon$}

\citet{DLT13} provided alternative definition of $\sigma$ in terms
of sequential ends. Before recalling the definition of sequential
ends, we first recall the definition of ends of a topological space
introduced by \citet{Fre31}. Let $X$ be a topological space. A proper
continuous map $r\colon\mathbb{R}_{+}\to X$ is called a \emph{proper
ray}. (A map between topological spaces is said to be \emph{proper}
if the inverse image of each compact set is compact.) Two proper rays
$r_{0},r_{1}$ in $X$ are said to be \emph{converge to the same end}
if for all compact subsets $K$ of $X$ there exists an $N\in\mathbb{N}$
such $r_{0}\left(\mathbb{R}_{\geq N}\right)$ and $r_{1}\left(\mathbb{R}_{\geq N}\right)$
are contained in the same path-connected component of $X\setminus K$.
This gives an equivalence relation of proper rays in $X$. The quotient
set is denoted by $\Ends$$\left(X\right)$. See also \citet[pp .144--148]{BH99}.
\begin{thm}[{\citet[Proposition 2.12]{DLT13}}]
Let $\left(X,\xi\right)$ be a proper geodesic pointed metric space.
Then $\Ends\left(X\right)$ is isomorphic to $\sigma\left(X,\xi\right)$.
\end{thm}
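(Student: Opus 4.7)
I will produce a bijection $\Phi\colon\sigma(X,\xi)\to\Ends(X)$. Given $s\in S(X,\xi)$, pick for each $i$ a geodesic from $s(i)$ to $s(i+1)$ parametrised on $[i,i+1]$, and concatenate them into a continuous curve $r_{s}\colon\mathbb{R}_{+}\to X$. Bornologousness of $s$ gives a uniform bound $L$ on the lengths of these segments, so the image of $r_{s}$ lies within distance $L$ of $\set{s(i)|i\in\mathbb{N}}$; combined with the properness of $s$ and of $X$, this makes $r_{s}$ a topologically proper ray. Set $\Phi([s]):=[r_{s}]$. Well-definedness reduces to the case where $s$ is a subsequence of $t$ via some $\phi\colon\mathbb{N}\to\mathbb{N}$: given compact $K\subseteq X$, enlarge it by $L$ and use properness to push both tails of $r_{s}$ and $r_{t}$ into $X\setminus K$, where they become path-connected sets sharing the common point $s(i_{0})=t(\phi(i_{0}))$ for large $i_{0}$, and so lie in the same path-component.

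\textbf{Surjectivity.} For a ray $r$, join $\xi$ to $r(n)$ by a geodesic $\alpha_{n}$. Properness of $X$ and Arzel\`a--Ascoli extract a locally-uniform limit $\gamma\colon\mathbb{R}_{+}\to X$, which is a unit-speed geodesic ray based at $\xi$. A standard comparison shows $\gamma$ and $r$ represent the same end: once the tail of $r$ lies in a single path-component $C$ of $X\setminus K$, the approximating segments $\alpha_{n}$ are forced into $\overline{C}$ for all large $n$, and therefore so is $\gamma$; a mild enlargement of $K$ avoids the boundary. Setting $s(i):=\gamma(i)$ yields a coarse sequence whose associated ray $r_{s}$ lies within distance $1$ of $\gamma$, so $\Phi([s])=[r]$.

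\textbf{Injectivity.} Suppose $r_{s_{0}}$ and $r_{s_{1}}$ represent the same end. I build a coarse sequence $t$ having both $s_{0}$ and $s_{1}$ as subsequences, whence $s_{0}\equiv_{X,\xi}^{\sigma}t\equiv_{X,\xi}^{\sigma}s_{1}$. Exhaust $X$ by closed balls $K_{n}=\overline{B(\xi,n)}$; for each $n$, pick indices $N_{0}(n),N_{1}(n)$ with $s_{0}(N_{0}(n)),s_{1}(N_{1}(n))\notin K_{n}$ and a path $p_{n}$ in $X\setminus K_{n}$ joining them, then sample $p_{n}$ at consecutive distance $\leq1$. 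Define $t$ by the ping-pong recipe: $s_{0}(0),\dots,s_{0}(N_{0}(1))$, then the samples of $p_{1}$, then $s_{1}(N_{1}(1)),\dots,s_{1}(N_{1}(2))$, then the reversed samples of $p_{2}$, then $s_{0}(N_{0}(2)),\dots,s_{0}(N_{0}(3))$, and so on, arranged so that every point of $s_{0}$ and every point of $s_{1}$ eventually appears in its original order. Bornologousness is immediate from the uniform bound on consecutive distances, and properness holds because from stage $n$ on $t$ remains outside $K_{n-1}$.

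\textbf{Main obstacle.} The principal difficulty is the index bookkeeping in the ping-pong construction: the enumeration must realise $s_{0}$ and $s_{1}$ simultaneously as strictly increasing subsequences of $t$ while consecutive entries of $t$ stay uniformly close. Same-end convergence supplies the connecting paths $p_{n}$, but their interleaving between the prescribed segments of $s_{0}$ and $s_{1}$ must be carefully orchestrated so that no point of either sequence is omitted or revisited out of order. A secondary subtlety, arising in the surjectivity step, is verifying that the Arzel\`a--Ascoli limit $\gamma$ genuinely represents the given end; this requires managing the closure-versus-component issue by slightly enlarging the compact set $K$.
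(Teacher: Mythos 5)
The paper does not actually prove this statement—it is quoted from \citep{DLT13}—so I can only judge your argument on its own merits and against the closely related zig-zag argument the paper does give for Theorem \ref{fact:E-implies-S}. Your overall strategy (geodesic interpolation in one direction, Arzel\`a--Ascoli plus sampling in the other, and a ping-pong concatenation for injectivity) is the standard and correct one, and the first two steps are essentially sound. The one imprecision there is the claim that the approximating geodesics $\alpha_{n}$ are ``forced into $\overline{C}$'': they cannot be, since they start at $\xi\in K$. What you actually need (and what your enlargement remark gestures at) is that a unit-speed geodesic issuing from $\xi$ stays outside $B\left(\xi,t\right)$ from time $t$ onwards, so that for $t$ large only the tails of the $\alpha_{n}$ and of $\gamma$ are relevant, and a short connecting geodesic between $\gamma\left(t\right)$ and $\alpha_{n}\left(t\right)$ also avoids $K$.

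The genuine gap is in the injectivity step. As written, your interleaved sequence $t$ contains the blocks $s_{0}\left(0\right),\dots,s_{0}\left(N_{0}\left(1\right)\right)$ and, later, $s_{0}\left(N_{0}\left(2\right)\right),\dots,s_{0}\left(N_{0}\left(3\right)\right)$, but the entries $s_{0}\left(N_{0}\left(1\right)+1\right),\dots,s_{0}\left(N_{0}\left(2\right)-1\right)$ never appear (and symmetrically $s_{1}\left(0\right),\dots,s_{1}\left(N_{1}\left(1\right)-1\right)$ is missing), so neither $s_{0}$ nor $s_{1}$ is a subsequence of $t$ and the chain $s_{0}\equiv_{X,\xi}^{\sigma}t\equiv_{X,\xi}^{\sigma}s_{1}$ does not follow. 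You correctly flag this bookkeeping as the main obstacle, but you do not resolve it, and no choice of the indices $N_{i}\left(n\right)$ alone can: whichever sequence you are not currently traversing necessarily accumulates skipped entries. The fix is the one used in the paper's proof of Theorem \ref{fact:E-implies-S}: after each crossover along $p_{n}$, retrace the target sequence \emph{backwards} to the index last visited at the previous stage (all the way to index $0$ only at the very first stage) before running it forward again. One must then check that these retraced segments do not destroy properness; they do not, because each retrace only revisits points of a tail already known to lie outside $K_{n-1}$, and the single excursion back to the basepoint happens only once. With that repair your argument is complete.
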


Returning to the definition of sequential ends, let $\left(X,\xi\right)$
be a pointed coarse space. Given $s,t\in S\left(X,\xi\right)$, we
say that $s$ and $t$ are \emph{$\varepsilon$-equivalent} ($s\equiv_{X,\xi}^{\varepsilon}t$)
if there is an $E\in\mathcal{C}_{X}$ such that for all bounded subsets
$B\ni\xi$ of $X$ there exists an $N\in\mathbb{N}$ such that $\set{s\left(i\right)}_{i\geq N}$
and $\set{t\left(i\right)}_{i\geq N}$ are contained in the same $E$-connected
component of $X\setminus B$. Note that a subset $A$ of $X$ is \emph{$E$-connected}
if and only if for any two points $x,y\in A$, there is a finite sequence
$\set{x_{i}}_{i\leq n}$ in $A$, called an \emph{$E$-chain}, such
that $x_{0}=x$, $x_{n}=y$, and $\left(x_{i},x_{i+1}\right)\in E$
for all $i<n$. The $\equiv_{X,\xi}^{\varepsilon}$-equivalence classes
are called \emph{sequential ends} of $\left(X,\xi\right)$. The quotient
set
\[
\varepsilon\left(X,\xi\right):=S\left(X,\xi\right)/\equiv_{X,\xi}^{\varepsilon}
\]
can be considered as a coarsely invariant functor $\varepsilon\colon\pCoarse\to\Sets$.
The morphism part of $\varepsilon$ is defined similarly to that of
$\sigma$.
\begin{thm}[{\citet[Theorem 3.3]{DLT13}}]
\label{thm:sigma-implies-epsilon}Let $\left(X,\xi\right)$ be a
pointed coarse space. Then $\equiv_{X,\xi}^{\sigma}$ implies $\equiv_{X,\xi}^{\varepsilon}$.
\end{thm}

\begin{proof}
Obvious from the fact that every coarse sequence is $\varepsilon$-equivalent
to its subsequences.
\end{proof}
\begin{cor}
\label{cor:Phi}Let $\left(X,\xi\right)$ be a pointed coarse space.
The map $\varphi_{\left(X,\xi\right)}\colon\sigma\left(X,\xi\right)\to\varepsilon\left(X,\xi\right)$
defined by
\[
\varphi_{\left(X,\xi\right)}\left[s\right]_{\equiv_{X,\xi}^{\sigma}}:=\left[s\right]_{\equiv_{X,\xi}^{\varepsilon}}
\]
is well-defined, surjective and natural in $\left(X,\xi\right)$.
\end{cor}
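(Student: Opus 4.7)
The plan is to verify the three claimed properties in turn, using \prettyref{fact:sigma-implies-epsilon} as the only substantive input; the rest is bookkeeping on quotient sets.

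For well-definedness, I would observe that $\varphi_{(X,\xi)}$ is induced, through the universal property of the quotient $S(X,\xi)\twoheadrightarrow \sigma(X,\xi)$, by the composition $S(X,\xi)\twoheadrightarrow \varepsilon(X,\xi)$. What has to be checked is that the latter map respects $\equiv^{\sigma}_{X,\xi}$, i.e.\ that $s\equiv^{\sigma}_{X,\xi}t$ implies $[s]_{\equiv^{\varepsilon}_{X,\xi}}=[t]_{\equiv^{\varepsilon}_{X,\xi}}$. This is exactly the content of \prettyref{fact:sigma-implies-epsilon}.

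For surjectivity, I would simply note that every element of $\varepsilon(X,\xi)$ is of the form $[s]_{\equiv^{\varepsilon}_{X,\xi}}$ for some $s\in S(X,\xi)$, and by definition $\varphi_{(X,\xi)}[s]_{\equiv^{\sigma}_{X,\xi}}=[s]_{\equiv^{\varepsilon}_{X,\xi}}$. Hence $\varphi_{(X,\xi)}$ is automatically surjective.

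Naturality in $(X,\xi)$ amounts to commutativity, for every basepoint-preserving coarse map $f\colon(X,\xi)\to(Y,\eta)$, of the square whose horizontal arrows are $\varphi_{(X,\xi)}$ and $\varphi_{(Y,\eta)}$ and whose vertical arrows are $\sigma f$ and $\varepsilon f$. I would unfold both composites on a representative: tracing $[s]_{\equiv^{\sigma}_{X,\xi}}$ along $\varphi_{(Y,\eta)}\circ\sigma f$ yields $[f\circ s]_{\equiv^{\varepsilon}_{Y,\eta}}$ by the definitions of $\sigma f$ and of $\varphi_{(Y,\eta)}$, while tracing it along $\varepsilon f\circ\varphi_{(X,\xi)}$ yields the same $[f\circ s]_{\equiv^{\varepsilon}_{Y,\eta}}$ by the definitions of $\varphi_{(X,\xi)}$ and of $\varepsilon f$. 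Both composites therefore agree on every representative, hence on every equivalence class.

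There is essentially no obstacle here: the statement is a formal corollary of \prettyref{fact:sigma-implies-epsilon}. The only mild subtlety is the standard verification that the functorial actions $\sigma f$ and $\varepsilon f$ (as recalled in the excerpt) are themselves well-defined on equivalence classes, but that is taken for granted by the time one reaches this corollary.
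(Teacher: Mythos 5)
Your proof is correct and matches the paper's (implicit) reasoning: the paper states this corollary without proof precisely because it is the formal consequence of Theorem \ref{fact:sigma-implies-epsilon} that you spell out, with surjectivity coming from the shared set of representatives $S(X,\xi)$ and naturality from chasing a representative through both composites. Nothing further is needed.
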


On the other hand, the converse of \prettyref{thm:sigma-implies-epsilon}
holds for metrisable spaces.
\begin{thm}[{\citet[Theorem 3.3]{DLT13}}]
\label{thm:E-implies-S}Let $\left(X,\xi\right)$ be a pointed metric
space. Then $\equiv_{X,\xi}^{\varepsilon}$ implies $\equiv_{X,\xi}^{\sigma}$.
\end{thm}

\begin{proof}
Let $s,t\in S\left(X,\xi\right)$ and suppose $s\equiv_{X,\xi}^{\varepsilon}t$.
Let $K>0$ be a witness of the $\varepsilon$-equivalence. Then for
each $r\in\mathbb{N}$, there is an $N_{r}\in\mathbb{N}$ such that
$\set{s\left(i\right)}_{i\geq N_{r}}$ and $\set{t\left(i\right)}_{i\geq N_{r}}$
are contained in the same $K$-connected component outside the $r$-ball
$B_{X}\left(\xi;r\right)$. In particular, there exists a $K$-chain
$\set{u_{i}^{r}}_{i\leq M_{r}}$ that connects $s\left(N_{r}\right)$
and $t\left(N_{r}\right)$ outside $B_{X}\left(\xi;r\right)$. We
may assume that $N_{r}<N_{r+1}$ for all $r\in\mathbb{N}$. Then the
concatenation of the sequences
\begin{gather*}
\set{s\left(i\right)}_{i\leq N_{0}},\set{u_{i}^{0}}_{i\leq M_{0}},\set{t\left(N_{0}-i\right)}_{i\leq N_{0}}\\
\set{t\left(i\right)}_{i\leq N_{1}},\set{u_{M_{1}-i}^{1}}_{i\leq M_{1}},\set{s\left(M_{1}-i\right)}_{i\leq M_{1}-M_{0}},\\
\set{s\left(M_{0}+i\right)}_{i\leq M_{2}-M_{0}},\set{u_{i}^{2}}_{i\leq M_{2}},\set{t\left(M_{2}-i\right)}_{i\leq M_{2}-M_{1}},\\
\vdots
\end{gather*}
is proper bornologous, and has $s$ and $t$ as subsequences, whence
$s\equiv_{X,\xi}^{\sigma}t$.
\end{proof}
\begin{cor}
\label{cor:phi-bijective}The functors $\sigma$ and $\varepsilon$
are equal on $\pMetr$. The natural map $\varphi_{\left(X,\xi\right)}\colon\sigma\left(X,\xi\right)\to\varepsilon\left(X,\xi\right)$
is therefore the identity map for all metrisable spaces $\left(X,\xi\right)$.
\end{cor}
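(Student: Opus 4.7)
The plan is simply to combine the two theorems immediately preceding the corollary. By \prettyref{fact:sigma-implies-epsilon}, $s\equiv_{X,\xi}^{\sigma}t$ always implies $s\equiv_{X,\xi}^{\varepsilon}t$, and by \prettyref{fact:E-implies-S} the converse implication holds whenever $(X,\xi)\in\pMetr$. Hence, on every pointed metric space, the two relations $\equiv_{X,\xi}^{\sigma}$ and $\equiv_{X,\xi}^{\varepsilon}$ coincide as subsets of $S(X,\xi)\times S(X,\xi)$.

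Once the equivalence relations are literally equal, the quotient sets
\[
\sigma(X,\xi)=S(X,\xi)/\equiv_{X,\xi}^{\sigma}\quad\text{and}\quad\varepsilon(X,\xi)=S(X,\xi)/\equiv_{X,\xi}^{\varepsilon}
\]
are the same set of equivalence classes, not merely isomorphic. For morphisms, both $\sigma f$ and $\varepsilon f$ are defined by the common rule $[s]\mapsto[f\circ s]$, so once the underlying classes agree on $\pMetr$ the morphism assignments agree as well. This gives $\sigma=\varepsilon$ as functors on $\pMetr$.

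Finally, the statement $\varphi_{(X,\xi)}=\id$ for $(X,\xi)\in\pMetr$ falls out of the defining formula $\varphi_{(X,\xi)}[s]_{\equiv_{X,\xi}^{\sigma}}=[s]_{\equiv_{X,\xi}^{\varepsilon}}$ from \prettyref{cor:Phi}: on a metric space the right-hand side is, by the above, the very same subset of $S(X,\xi)$ as the left-hand side. There is essentially no obstacle to overcome here, since both non-trivial directions have already been established; the only minor point requiring care is to distinguish honest equality of functors from equality merely up to natural isomorphism, but the explicit formula for $\varphi$ makes the stronger statement automatic.
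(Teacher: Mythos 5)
Your proposal is correct and follows exactly the route the paper intends (the paper leaves this corollary without an explicit proof precisely because it is the immediate combination of Theorems \ref{fact:sigma-implies-epsilon} and \ref{fact:E-implies-S}). Your additional care in distinguishing literal equality of the quotient sets and functors from mere isomorphism is a sound, if minor, elaboration of the same argument.
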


\begin{rem}
\label{rem:remark-on-phi}In the proof of \prettyref{thm:E-implies-S},
the properness of the resulting sequence depends on the fact that
each bounded subset $B\ni\xi$ of $X$ is contained in some $r$-ball
$B\left(\xi;r\right)$. In other words, the bornology of $X$ is generated
by the countable family $\set{B\left(\xi;r\right)|r\in\mathbb{N}}$.
Such a countability property fails in general pointed coarse spaces.
\end{rem}

\subsection{Invariant $\iota$}

In \citep{Ima20a}, we introduced a set-valued invariant $\iota\left(X,\xi\right)$
of a (standard) pointed coarse space $\left(X,\xi\right)$ via nonstandard
analysis. First, let us recall the nonstandard treatment of coarse
spaces. We assume the reader to be familiar with the terminology of
nonstandard analysis. Fix a small full subcategory $\mathscr{C}$
of $\pCoarse$. By the reflection principle, there exists a transitive
set $\mathbb{U}\ni\mathcal{C}$, the \emph{standard universe}, such
that all classical objects we need belong to $\mathbb{U}$ and all
(but finitely many) set-theoretic formulae we need are absolute with
respect to $\mathbb{U}$. We also fix a sufficiently saturated elementary
extension $\ast\colon\mathbb{U}\hookrightarrow\ns{\mathbb{U}}$; $x\mapsto\ns{x}$,
the \emph{nonstandard extension}. See \citet[Section 4.4]{CK90} for
more detailed account of nonstandard analysis.

Let $X$ be a standard coarse space. The \emph{finite proximity} of
$\ns{X}$ is the binary relation on $\ns{X}$ defined by
\[
x\sim_{X}y:\iff\exists E\in\mathcal{C}_{X}.\left(x,y\right)\in\ns{E}.
\]
For each $\xi\in\ns{X}$, the set of all points infinitely far away
from $\xi$ is denoted by
\[
\INF\left(X,\xi\right):=\set{x\in\ns{X}|x\nsim_{X}\xi}.
\]
The finite proximity relation $\sim_{X}$ completely characterises
the coarse structure of $X$. We have indeed the following characterisations
(see \citep{Ima19}).
\begin{enumerate}
\item A subset $E$ of $X\times X$ is controlled if and only if $\ns{E}\subseteq{\sim_{X}}$.
\item A subset $B$ of $X$ is bounded if and only if $\ns{B}\subseteq\ns{X}\setminus\INF\left(X,\xi\right)$
for all $\xi\in B$.
\item A subset $A$ of $X$ is $E$-connected for some $E\in\mathcal{C}_{X}$
if and only if for any two points $x,y\in\ns{A}$, there is an internal
hyperfinite sequence $\set{x_{i}}_{i\leq n}$ in $\ns{A}$, where
$n\in\ns{\mathbb{N}}$, such that $x_{0}=x$, $x_{n}=y$, and $x_{i}\sim_{X}x_{i+1}$
for all $i<n$.
\item A map $f\colon X\to Y$ between standard coarse spaces is proper if
and only if $\ns{f}\left(\INF\left(X,\xi\right)\right)\subseteq\INF\left(Y,f\left(\xi\right)\right)$
for all $\xi\in X$.
\item A map $f\colon X\to Y$ between standard coarse spaces is bornologous
if and only if $x\sim_{X}y$ implies $\ns{f}\left(x\right)\sim_{Y}\ns{f}\left(y\right)$
for all $x,y\in\ns{X}$.
\end{enumerate}
Now, recall the definition of the invariant $\iota$. Let $\left(X,\xi\right)$
be a pointed coarse space in $\mathscr{C}$. Given $x,y\in\INF\left(X,\xi\right)$,
we say that $x$ and $y$ are \emph{$\iota$-equivalent} ($x\equiv_{X,\xi}^{\iota}y$)
if there exists an internal hyperfinite sequence $\set{x_{i}}_{i\leq n}$
in $\INF\left(X,\xi\right)$, called a \emph{macrochain}, such that
$x_{0}=x$, $x_{n}=y$, and $x_{i}\sim_{X}x_{i+1}$ for all $i<n$.
The quotient set
\[
\iota\left(X,\xi\right):=\INF\left(X,\xi\right)/\equiv_{X,\xi}^{\iota}
\]
can be considered as a coarsely invariant functor $\iota\colon\mathscr{C}\subseteq\pCoarse\to\Sets$.
More precisely, the morphism part of $\iota$ is given by
\[
\iota f\left[x\right]_{\equiv_{X,\xi}}:=\left[\ns{f}\left(x\right)\right]_{\equiv_{Y,\eta}}
\]
for each coarse map $f\colon\left(X,\xi\right)\to\left(Y,\eta\right)$
in $\mathscr{C}$. The well-definedness follows from the nonstandard
characterisation of coarseness mentioned above.
\begin{rem}
\citet{Gol11} introduced a similar invariant $\IPC\left(X\right)$
for a metric space $X$, where the metric function is assumed to be
finite-valued. Rougly speaking, $\IPC\left(X\right)$ is the set of
internal path components of $\INF\left(X,\xi\right)$; more precisely,
two points $x$ and $y$ of $\INF\left(X,\xi\right)$ are said to
\emph{belong to the same internal path component} if there exists
an internally continuous map $\gamma\colon\ns{\left[0,1\right]}\to\ns{X}$
such that $\gamma\left(\ns{\left[0,1\right]}\right)\subseteq\INF\left(X,\xi\right)$,
$\gamma\left(0\right)=x$ and $\gamma\left(1\right)=y$. If $X$ is
proper geodesic, then $\IPC\left(X\right)\cong\Ends\left(X\right)\cong\iota\left(X,\xi\right)$
by \citep[Lemmas 3.6 and 3.9]{Gol11}. On the other hand, if $X$
is (topologically) discrete, then $\IPC\left(X\right)\cong\INF\left(X,\xi\right)$;
hence, the coarse invariance of $\IPC$ holds only for proper geodesic
spaces.
\end{rem}

The relationship between $\sigma$ and $\iota$ is given by a natural
transformation $\omega\colon\sigma\restriction\mathscr{C}\To\iota$.
\begin{thm}[{\citep[Lemma 4.1]{Ima20a}}]
Let $\left(X,\xi\right)$ be a pointed coarse space in $\mathscr{C}$
and $s,t\in S\left(X,\xi\right)$. If $s\equiv_{X,\xi}^{\sigma}t$,
then $\ns{s}\left(i\right)\equiv_{X,\xi}^{\iota}\ns{t}\left(j\right)$
for all $i,j\in\ns{\mathbb{N}}\setminus\mathbb{N}$.
\end{thm}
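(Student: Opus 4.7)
The plan is to peel off the combinatorial content using the definition of $\equiv_{X,\xi}^{\sigma}$ and then do everything internally inside $\ns{X}$. Since $\equiv_{X,\xi}^{\sigma}$ is the transitive closure of the subsequence relation, and one checks easily that $\equiv_{X,\xi}^{\iota}$ is an equivalence relation (concatenation of two internal hyperfinite sequences is again internal hyperfinite, and staying in $\INF(X,\xi)$ is preserved by concatenation), it suffices to treat the case where one of $s,t$ is a subsequence of the other. By symmetry, say $t = s \circ \phi$ for some strictly increasing $\phi\colon \mathbb{N}\to\mathbb{N}$.

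The core lemma I would prove first is the following: for any $u \in S(X,\xi)$ and any two infinite hypernaturals $i \le j$, one has $\ns{u}(i) \equiv_{X,\xi}^{\iota} \ns{u}(j)$. Since $u$ is bornologous, choose $E \in \mathcal{C}_X$ with $(u(k),u(k+1)) \in E$ for every $k \in \mathbb{N}$; by transfer, $(\ns{u}(k),\ns{u}(k+1)) \in \ns{E}$ for every $k \in \ns{\mathbb{N}}$, so $\ns{u}(k) \sim_X \ns{u}(k+1)$. The internal hyperfinite sequence $\bigl(\ns{u}(k)\bigr)_{i \le k \le j}$ therefore has consecutive terms finitely proximal. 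Moreover, by the nonstandard characterisation of properness applied to $u\colon(\mathbb{N},0)\to(X,\xi)$, we have $\ns{u}(k) \in \INF(X,\xi)$ whenever $k \in \ns{\mathbb{N}}\setminus\mathbb{N}$; in particular this holds for every $k$ with $i \le k \le j$ once $i$ is infinite. Hence the sequence is a macrochain, proving the lemma.

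To finish, let $i,j \in \ns{\mathbb{N}}\setminus\mathbb{N}$ and consider $\ns{s}(i)$ and $\ns{t}(j)$. From $t = s\circ\phi$ and transfer we get $\ns{t}(j) = \ns{s}(\ns{\phi}(j))$; since $\phi$ is strictly increasing and unbounded, transfer of $\forall n\,\exists m\,\forall k \ge m\;\phi(k) \ge n$ gives $\ns{\phi}(j) \in \ns{\mathbb{N}}\setminus\mathbb{N}$. Applying the core lemma to $u = s$ with indices $i$ and $\ns{\phi}(j)$ yields $\ns{s}(i) \equiv_{X,\xi}^{\iota} \ns{s}(\ns{\phi}(j)) = \ns{t}(j)$, as required.

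I do not expect a real obstruction here: every step reduces to transferring a $\forall$-statement about $\mathbb{N}$ to $\ns{\mathbb{N}}$ or invoking the nonstandard characterisations of properness and bornologousness recalled in the paper. The only point requiring a bit of care is the internal nature of the macrochain: one must quote the standard fact that the restriction of an internal function (namely $\ns{u}$) to an internal hyperfinite interval is internal, so that $\bigl(\ns{u}(k)\bigr)_{i \le k \le j}$ is a bona fide internal hyperfinite sequence witnessing $\equiv_{X,\xi}^{\iota}$.
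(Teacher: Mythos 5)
Your proof is correct. The paper itself gives no proof of this statement --- it is imported from \citep{Ima20a} as a citation --- so there is nothing to match line by line, but your argument is exactly the natural one and all its ingredients are already present in the surrounding text: your ``core lemma'' (that $\ns{u}\left(i\right)\equiv_{X,\xi}^{\iota}\ns{u}\left(j\right)$ for any coarse sequence $u$ and any infinite $i\leq j$, witnessed by the internal hyperfinite restriction of $\ns{u}$ to $\left[i,j\right]$) is verbatim the $\left(3\right)\To\left(2\right)$ step in the proof of \prettyref{lem:MainLem}, and your reduction to a single subsequence step via the transitivity and symmetry of $\equiv_{X,\xi}^{\iota}$ correctly unwinds the definition of $\equiv_{X,\xi}^{\sigma}$ as the equivalence relation generated by the subsequence relation. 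One remark on economy: given the machinery of this paper, the statement also follows with no further work by composing \prettyref{fact:sigma-implies-epsilon} (every $\sigma$-equivalence is an $\varepsilon$-equivalence) with the implication $\left(1\right)\To\left(2\right)$ of \prettyref{lem:MainLem}; your direct route is more elementary in that it avoids the $\varepsilon$-relation entirely and needs only transfer and the nonstandard characterisations of properness and bornologousness, whereas the composite route makes the factorisation $\omega=\omega'\circ\left(\varphi\restriction\mathscr{C}\right)$ conceptually transparent.
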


\begin{cor}
Let $\left(X,\xi\right)$ be a coarse space in $\mathscr{C}$. The
map $\omega_{\left(X,\xi\right)}\colon\sigma\left(X,\xi\right)\to\iota\left(X,\xi\right)$
defined by
\[
\omega_{\left(X,\xi\right)}\left[s\right]_{\equiv_{X,\xi}^{\sigma}}:=\left[\ns{s}\left(i\right)\right]_{\equiv_{X,\xi}^{\iota}},\ i\in\ns{\mathbb{N}}\setminus\mathbb{N}
\]
is well-defined and natural in $\left(X,\xi\right)$.
\end{cor}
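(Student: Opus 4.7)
The plan is to reduce well-definedness directly to the theorem immediately preceding the corollary, and then to verify naturality by a one-line transfer argument. First, I would check that the formula defining $\omega_{(X,\xi)}$ actually produces an element of $\iota(X,\xi)$. For any $s\in S(X,\xi)$ and any $i\in\ns{\mathbb{N}}\setminus\mathbb{N}$, I would argue that $\ns{s}(i)\in\INF(X,\xi)$: the coarse map $s\colon(\mathbb{N},0)\to(X,\xi)$ is proper, the bounded subsets of $\mathbb{N}$ are the finite ones, so $\INF(\mathbb{N},0)=\ns{\mathbb{N}}\setminus\mathbb{N}$, and the nonstandard characterisation of properness listed in the introduction gives $\ns{s}(\ns{\mathbb{N}}\setminus\mathbb{N})\subseteq\INF(X,\xi)$.

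Next, I would address the two ambiguities in the definition simultaneously. Given $s\equiv_{X,\xi}^{\sigma}t$ in $S(X,\xi)$ and any two indices $i,j\in\ns{\mathbb{N}}\setminus\mathbb{N}$, the preceding theorem gives $\ns{s}(i)\equiv_{X,\xi}^{\iota}\ns{t}(j)$, so $[\ns{s}(i)]_{\equiv^{\iota}}=[\ns{t}(j)]_{\equiv^{\iota}}$. Specialising to $s=t$ shows that the value is independent of the choice of infinite index; the general case shows it is independent of the choice of representative in the $\sigma$-class. Combined with the first step, this establishes well-definedness.

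For naturality, I would fix a morphism $f\colon(X,\xi)\to(Y,\eta)$ in $\mathscr{C}$ and evaluate both composites of the naturality square on a generator $[s]_{\equiv^{\sigma}_{X,\xi}}$. Chasing through the definitions, $\omega_{(Y,\eta)}\circ\sigma f$ sends it to $[\ns{(f\circ s)}(i)]_{\equiv^{\iota}_{Y,\eta}}$, while $\iota f\circ\omega_{(X,\xi)}$ sends it to $[\ns{f}(\ns{s}(i))]_{\equiv^{\iota}_{Y,\eta}}$. Equality is then the single application of the transfer principle $\ns{(f\circ s)}=\ns{f}\circ\ns{s}$.

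No step here looks like a genuine obstacle, since the preceding theorem does all the combinatorial work: the only substantive observation required is that the tails of a coarse sequence at infinite hypernatural indices sit inside $\INF(X,\xi)$, and this is an immediate consequence of the nonstandard characterisation of proper maps recalled earlier. I would therefore present the corollary with a very short proof that cites the theorem for well-definedness and transfer for naturality.
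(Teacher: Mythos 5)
Your proposal is correct and matches the paper's (implicit) argument: the corollary is stated as an immediate consequence of the preceding theorem, which handles both the choice of infinite index and the choice of $\sigma$-representative exactly as you describe, with $\ns{s}(i)\in\INF(X,\xi)$ following from the nonstandard characterisation of properness and naturality from $\ns{(f\circ s)}=\ns{f}\circ\ns{s}$ by transfer. No issues.
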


It has been known from \citep[Theorem 4.2]{Ima20a} that $\omega_{\left(X,\xi\right)}$
is surjective for all proper geodesic metrisable spaces $\left(X,\xi\right)$
in $\mathscr{C}$. However, no general results on the injectivity
have been given in \citep{Ima20a}. The aim of this paper is to solve
the injectivity problem for metrisable spaces. To do this, we first
observe that $\omega$ is decomposed into two natural transformations
$\varphi\restriction\mathscr{C}\colon\sigma\restriction\mathscr{C}\To\varepsilon\restriction\mathscr{C}$
and $\omega'\colon\varepsilon\restriction\mathscr{C}\To\iota$, where
$\varphi$ is the same as in \prettyref{cor:Phi} and $\omega'$ is
injective for all spaces. As a consequence, $\omega$ is injective
for all metrisable spaces and bijective for all proper geodesic metrisable
spaces. This partially answers the problems posed in \citep{Ima20a}.
Using the bijectivity result, we give a calculation of the above-mentioned
invariants of finitely branching trees in \prettyref{sec:Case-study}.
Finally, we conclude the paper with some remarks concerning non-metrisable
spaces in \prettyref{sec:Concluding-remarks}.

\section{Main Results}

We require a special case of the underspill principle.
\begin{lem}
\label{lem:weak-underspill}Let $X$ be a standard coarse space and
$\mathcal{A}$ an internal subset of $\ns{\mathcal{C}_{X}}$. If $\mathcal{A}$
contains all $E\in\ns{\mathcal{C}_{X}}$ with $\sim_{X}\subseteq E$,
then it contains $\ns{E}$ for some $E\in\mathcal{C}_{X}$.
\end{lem}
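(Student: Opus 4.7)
The plan is to run a standard underspill argument by contradiction. Suppose for contradiction that $\ns{E}\notin\mathcal{A}$ for every $E\in\mathcal{C}_{X}$; equivalently, writing $\mathcal{B}:=\ns{\mathcal{C}_{X}}\setminus\mathcal{A}$ (which is internal since $\mathcal{A}$ is), the set $\mathcal{B}$ contains $\ns{E}$ for each standard $E\in\mathcal{C}_{X}$. The goal is then to manufacture, by saturation, a single internal entourage $F_{0}\in\mathcal{B}$ that is so large it witnesses $\sim_{X}\subseteq F_{0}$, producing a contradiction with the hypothesis on $\mathcal{A}$.

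To do this, for each $E\in\mathcal{C}_{X}$ I would consider the internal set
\[
\mathcal{H}_{E}:=\set{F\in\mathcal{B}|\ns{E}\subseteq F}.
\]
I would then verify that the family $\set{\mathcal{H}_{E}}_{E\in\mathcal{C}_{X}}$ has the finite intersection property. This reduces to two observations: first, $\mathcal{C}_{X}$ is an ideal of $\mathcal{P}(X\times X)$ and so is closed under finite unions, and second, by transfer $\ns{(E_{1}\cup\cdots\cup E_{n})}=\ns{E_{1}}\cup\cdots\cup\ns{E_{n}}$. Hence, given $E_{1},\dots,E_{n}\in\mathcal{C}_{X}$, the element $\ns{(E_{1}\cup\cdots\cup E_{n})}$ belongs to $\mathcal{B}$ by our standing assumption and simultaneously contains each $\ns{E_{i}}$; so it lies in $\mathcal{H}_{E_{1}}\cap\cdots\cap\mathcal{H}_{E_{n}}$.

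By sufficient saturation of $\ns{\mathbb{U}}$ (which is at worst $|\mathcal{C}_{X}|^{+}$-saturation, covered by the standing assumption on the nonstandard extension), there exists $F_{0}\in\bigcap_{E\in\mathcal{C}_{X}}\mathcal{H}_{E}$. By construction $F_{0}\in\mathcal{B}$ and $\ns{E}\subseteq F_{0}$ for every $E\in\mathcal{C}_{X}$. Unwinding the definition of the finite proximity, $\sim_{X}=\bigcup_{E\in\mathcal{C}_{X}}\ns{E}\subseteq F_{0}$. The hypothesis on $\mathcal{A}$ then forces $F_{0}\in\mathcal{A}$, which contradicts $F_{0}\in\mathcal{B}$ and completes the proof.

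The only step that is not entirely mechanical is verifying the finite intersection property, and even there the content is essentially the stability of $\mathcal{C}_{X}$ under finite unions together with the commutation of $\ast$ with finite unions. Because the hypothesis already gives $\mathcal{A}$ as an internal set, the application of saturation is straightforward; I do not anticipate a genuine obstacle.
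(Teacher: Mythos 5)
Your argument is correct and is essentially identical to the paper's own proof: the paper likewise argues by contradiction, forms the internal sets $\mathcal{F}_{E}:=\set{F\in\ns{\mathcal{C}_{X}}\mid E\subseteq F\notin\mathcal{A}}$ (your $\mathcal{H}_{E}$), and applies saturation to their intersection to obtain an $F$ with $\sim_{X}\subseteq F$ outside $\mathcal{A}$. The only difference is that you spell out the finite intersection property (closure of the ideal $\mathcal{C}_{X}$ under finite unions plus transfer) and the identity $\sim_{X}=\bigcup_{E\in\mathcal{C}_{X}}\ns{E}$, which the paper leaves implicit.
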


\begin{proof}
Suppose, on the contrary, that $\ns{E}\notin\mathcal{A}$ for any
$E\in\mathcal{C}_{X}$. For each $E\in\mathcal{C}_{X}$, consider
the internal set
\[
\mathcal{F}_{E}:=\set{F\in\ns{\mathcal{C}_{X}}|\ns{E}\subseteq F\notin\mathcal{A}}.
\]
Since the family $\set{\mathcal{F}_{E}|E\in\mathcal{C}_{X}}$ has
the finite intersection property, its intersection $\bigcap_{E\in\mathcal{C}_{X}}\mathcal{F}_{E}$
is non-empty by the saturation principle. Let $F$ be an element of
the intersection. Then $F\in\ns{\mathcal{C}_{X}}\setminus\mathcal{A}$
and ${\sim_{X}}\subseteq F$.
\end{proof}
The $\varepsilon$-equivalence admits the following nonstandard characterisation,
a generalisation of \citep[Lemma 3.8]{Gol11} to coarse spaces.
\begin{lem}
\label{lem:MainLem}Let $\left(X,\xi\right)$ be a pointed coarse
space in $\mathscr{C}$ and $s,t\in S\left(X,\xi\right)$. The following
are equivalent:
\begin{enumerate}
\item $s\equiv_{X,\xi}^{\varepsilon}t$;
\item $\ns{s}\left(i\right)\equiv_{X,\xi}^{\iota}\ns{t}\left(j\right)$
for all $i,j\in\ns{\mathbb{N}}\setminus\mathbb{N}$;
\item $\ns{s}\left(i\right)\equiv_{X,\xi}^{\iota}\ns{t}\left(j\right)$
for some $i,j\in\ns{\mathbb{N}}\setminus\mathbb{N}$.
\end{enumerate}
\end{lem}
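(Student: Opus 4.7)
The plan is to prove the cycle $(1) \Rightarrow (2) \Rightarrow (3) \Rightarrow (1)$; the middle implication is immediate, and the two nontrivial ones are handled by complementary saturation-style arguments. The forward direction uses the saturation principle to globalise chain data over all bounded subsets, while the reverse uses \prettyref{lem:weak-underspill} to collapse the potentially many $\sim_X$-witnesses occurring along a macrochain into a single standard entourage.

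For $(1) \Rightarrow (2)$, fix infinite $i, j \in \ns{\mathbb{N}} \setminus \mathbb{N}$, let $E \in \mathcal{C}_X$ be the $\varepsilon$-equivalence witness of $(s,t)$, and enlarge $E$ to $E' \in \mathcal{C}_X$ that also contains the bornologous witnesses of $s$ and $t$ together with their inverses. For each bounded $B \ni \xi$, the $\varepsilon$-equivalence and properness jointly yield an $N_B \in \mathbb{N}$ and a standard finite $E'$-chain in $X \setminus B$ from $s(N_B)$ to $t(N_B)$; prepending and appending tails of $s$ and $t$ then shows that, for any finite $m, m' \geq N_B$, there is a finite $E'$-chain in $X \setminus B$ from $s(m)$ to $t(m')$. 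Transfer of this statement delivers, for our fixed $i, j$, an internal hyperfinite $\ns{E'}$-chain in $\ns{X} \setminus \ns{B}$ from $\ns{s}(i)$ to $\ns{t}(j)$. The internal sets $\Phi_B$ of all such chains form a family with the finite intersection property (finite unions of bounded sets are bounded), so saturation produces an element of $\bigcap_B \Phi_B$; this element is an internal hyperfinite $\ns{E'}$-chain that avoids every $\ns{B}$ at once, hence lies entirely in $\INF(X, \xi)$ and witnesses $\ns{s}(i) \equiv_{X,\xi}^{\iota} \ns{t}(j)$.

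For $(3) \Rightarrow (1)$, let $\{x_k\}_{k \leq n}$ be an internal hyperfinite macrochain in $\INF(X, \xi)$ with $x_0 = \ns{s}(i)$ and $x_n = \ns{t}(j)$ for some infinite $i, j$. The internal set $\mathcal{A} := \{F \in \ns{\mathcal{C}_X} \mid (x_k, x_{k+1}) \in F \text{ for all } k < n\}$ contains every internal $F$ with ${\sim_X} \subseteq F$, so by \prettyref{lem:weak-underspill} it contains $\ns{E_0}$ for some $E_0 \in \mathcal{C}_X$. Let $E_s, E_t \in \mathcal{C}_X$ be bornologous witnesses of $s$ and $t$, and set $E := E_0 \cup E_s \cup E_s^{-1} \cup E_t \cup E_t^{-1} \in \mathcal{C}_X$. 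For any bounded $B \ni \xi$, properness supplies $N \in \mathbb{N}$ with $s(m), t(m) \notin B$ for $m \geq N$, and transfer upgrades this to $\ns{s}(m), \ns{t}(m) \notin \ns{B}$ for all internal $m \geq N$. Concatenating $\ns{s}(N), \ldots, \ns{s}(i) = x_0, \ldots, x_n = \ns{t}(j), \ldots, \ns{t}(N)$ thus produces an internal hyperfinite $\ns{E}$-chain in $\ns{X} \setminus \ns{B}$ joining $\ns{s}(N)$ and $\ns{t}(N)$; downward transfer recovers an ordinary finite $E$-chain in $X \setminus B$ from $s(N)$ to $t(N)$, and because $E$ already controls the tails of $s$ and $t$, both $\{s(m)\}_{m \geq N}$ and $\{t(m)\}_{m \geq N}$ sit in a single $E$-connected component of $X \setminus B$. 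The same $E$ works uniformly in $B$, giving $s \equiv_{X,\xi}^{\varepsilon} t$.

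The main obstacle I anticipate is precisely the step in $(3) \Rightarrow (1)$ that extracts one standard $E \in \mathcal{C}_X$ governing the whole macrochain: a priori each consecutive pair $(x_k, x_{k+1})$ is merely $\sim_X$-related, hence controlled by some $E_k \in \mathcal{C}_X$ that may depend on $k$, and the chain length $n$ is hyperfinite. Collapsing this potentially external family of witnesses into a single standard entourage is exactly what \prettyref{lem:weak-underspill} accomplishes; once this is in hand, the remainder of the argument is routine transfer and properness bookkeeping.
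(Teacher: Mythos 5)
Your proof is correct and follows essentially the same route as the paper: the hard direction $(3)\Rightarrow(1)$ uses \prettyref{lem:weak-underspill} in exactly the same way to extract a single standard entourage controlling the macrochain and the tails, followed by downward transfer uniformly in $B$. The only (cosmetic) difference is in the easy direction, where you saturate over the family of internal chain-sets $\Phi_{B}$ indexed by standard bounded sets, whereas the paper invokes the enlargement principle once to obtain a single internal bounded $B$ with $\ns{X}\setminus B\subseteq\INF\left(X,\xi\right)$ and then transfers once; both are instances of the same saturation idea.
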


\begin{proof}
$\left(1\right)\To\left(3\right)$: Let $E$ be a witness of $s\equiv_{X,\xi}^{\varepsilon}t$.
Let $B$ be an internal bounded subset of $\ns{X}$ such that $\ns{X}\setminus B\subseteq\INF\left(X,\xi\right)$
by the saturation principle. (In fact, it suffices to apply the enlargement
principle, a weaker version of saturation. See also \citep[Lemma 2.5]{Ima19}.)
By transfer, there exists an $N\in\ns{\mathbb{N}}$ such that $\set{\ns{s}\left(i\right)}_{i\geq N}$
and $\set{\ns{t}\left(i\right)}_{i\geq N}$ are contained in the same
internally $\ns{E}$-connected component of $\ns{X}\setminus B$.
Choose an $i\in\ns{\mathbb{N}}\setminus\mathbb{N}$ so that $i\geq N$.
Then there exists an internal $\ns{E}$-chain in $\ns{X}\setminus B$
that connects $\ns{s}\left(i\right)$ and $\ns{t}\left(i\right)$.
Such a chain witnesses that $\ns{s}\left(i\right)\equiv_{X,\xi}^{\iota}\ns{t}\left(i\right)$.

$\left(3\right)\To\left(2\right)$: Let $i,j\in\ns{\mathbb{N}}\setminus\mathbb{N}$
with $i\leq j$. Since $s$ is coarse, the sequence $\set{\ns{s}\left(k\right)}_{k=i}^{j}$
is a macrochain in $\INF\left(X,\xi\right)$, so $\ns{s}\left(i\right)\equiv_{X,\xi}^{\iota}\ns{s}\left(j\right)$.
Similarly, we have that $\ns{t}\left(i\right)\equiv_{X,\xi}^{\iota}\ns{t}\left(j\right)$.
The desired implication is now obvious.

$\left(2\right)\To\left(1\right)$: Suppose that $\ns{s}\left(i\right)\equiv_{X,\xi}^{\iota}\ns{t}\left(j\right)$
for some $i,j\in\ns{\mathbb{N}}\setminus\mathbb{N}$, i.e., there
exists an internal hyperfinite sequence $\set{u_{k}}_{k\leq n}$ in
$\INF\left(X,\xi\right)$ such that $u_{0}=\ns{s}\left(i\right)$,
$u_{n}=\ns{t}\left(j\right)$, and $u_{k}\sim_{X}u_{k+1}$ holds for
all $k<n$. Consider the following internal subset of $\ns{\mathcal{C}_{X}}$:
\[
\mathcal{A}:=\Set{E\in\ns{\mathcal{C}_{X}}|\begin{gathered}\left(u_{k},u_{k+1}\right)\in E\cap E^{-1}\text{ for all }k<n,\\
\left(\ns{s}\left(k\right),\ns{s}\left(k+1\right)\right)\in E\cap E^{-1}\text{ for all }k\in\ns{\mathbb{N}},\\
\left(\ns{t}\left(k\right),\ns{t}\left(k+1\right)\right)\in E\cap E^{-1}\text{ for all }k\in\ns{\mathbb{N}}
\end{gathered}
}.
\]
Since $s$ and $t$ are bornologous, $\ns{s}\left(k\right)\sim_{X}\ns{s}\left(k+1\right)$
and $\ns{t}\left(k\right)\sim_{X}\ns{t}\left(k+1\right)$ hold for
all $k\in\ns{\mathbb{N}}$. Hence $\mathcal{A}$ contains all $E\in\ns{\mathcal{C}_{X}}$
with $\sim_{X}\subseteq E$. By \prettyref{lem:weak-underspill},
there exists a standard $E\in\mathcal{C}_{X}$ such that $\ns{E}\in\mathcal{A}$.
Fix such an $E$.

Now, let $B$ be a bounded subset of $X$ containing $\xi$. Let $N=\min\set{i,j}$.
Since $s$ and $t$ are proper, $S:=\set{\ns{s}\left(k\right)}_{k\geq N}$
and $T:=\set{\ns{t}\left(k\right)}_{k\geq N}$ are contained in $\INF\left(X,\xi\right)\subseteq\ns{X}\setminus\ns{B}$.
By the choice of $E$, any two points of $S$ can be connected by
an internal $\ns{E}$-chain in $S\subseteq\ns{X}\setminus\ns{B}$;
any two points of $T$ can be connected by an internal $\ns{E}$-chain
in $T\subseteq\ns{X}\setminus\ns{B}$; and the points $\ns{s}\left(i\right)\in S$
and $\ns{t}\left(j\right)\in T$ can be connected by the internal
$\ns{E}$-chain $\set{u_{i}}_{i\leq n}$ in $\INF\left(X,\xi\right)\subseteq\ns{X}\setminus\ns{B}$.
Combining them, any two points of $S\cup T$ can be connected by an
internal $\ns{E}$-chain in $\ns{X}\setminus\ns{B}$. In other words,
there exists an $N\in\ns{\mathbb{N}}$ such that $\set{\ns{s}\left(k\right)}_{k\geq N}$
and $\set{\ns{t}\left(k\right)}_{k\geq N}$ are contained in the same
internal $\ns{E}$-connected component of $\ns{X}\setminus\ns{B}$.
By transfer, there exists an $N\in\mathbb{N}$ such that $\set{s\left(k\right)}_{k\geq N}$
and $\set{t\left(k\right)}_{k\geq N}$ are contained in the same $E$-connected
component of $X\setminus B$. Because $B$ is arbitrary and $E$ does
not depend on $B$, we have that $s\equiv_{X,\xi}^{\varepsilon}t$.
\end{proof}
\begin{rem}
The use of the saturation principle (\prettyref{lem:weak-underspill})
in the implication $\left(2\right)\To\left(1\right)$ is avoidable
if $\left(X,\xi\right)$ is metrisable. Suppose that $\ns{s}\left(i\right)\equiv_{X,\xi}^{\iota}\ns{t}\left(j\right)$
for some $i,j\in\ns{\mathbb{N}}\setminus\mathbb{N}$, i.e., there
exists an internal hyperfinite sequence $\set{u_{k}}_{i\leq n}$ in
$\INF\left(X,\xi\right)$ such that $u_{0}=\ns{s}\left(i\right)$,
$u_{k}=\ns{t}\left(j\right)$, and $\ns{d_{X}}\left(u_{k},u_{k+1}\right)$
is finite for all $k<n$. The maximum $L:=\max_{k<n}\ns{d_{X}}\left(u_{k},u_{k+1}\right)$
exists by transfer, and is finite. Since $s$ and $t$ are bornologous,
the suprema $M:=\sup_{k\in\mathbb{N}}d_{X}\left(s\left(k\right),s\left(k+1\right)\right)$
and $N:=\sup_{k\in\mathbb{N}}d_{X}\left(k\left(i\right),k\left(i+1\right)\right)$
exist. Fix a standard $K>0$ such that $L,M,N\leq K$ and let $E:=\set{\left(x,y\right)\in X\times X|d_{X}\left(x,y\right)\leq K}\in\mathcal{C}_{X}$.
The remaining proof is the same as above.
\end{rem}

\begin{thm}
\label{thm:omega-prime}Let $\left(X,\xi\right)$ be a pointed coarse
space in $\mathscr{C}$. The map $\omega_{\left(X,\xi\right)}'\colon\varepsilon\left(X,\xi\right)\to\iota\left(X,\xi\right)$
defined by
\[
\omega_{\left(X,\xi\right)}'\left[s\right]_{\equiv_{X,\xi}^{\varepsilon}}:=\left[\ns{s}\left(i\right)\right]_{\equiv_{X,\xi}^{\iota}},\ i\in\ns{\mathbb{N}}\setminus\mathbb{N}
\]
is well-defined, injective and natural in $\left(X,\xi\right)$.
\end{thm}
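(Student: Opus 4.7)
The plan is to derive this theorem as an almost immediate corollary of \prettyref{lem:MainLem}, whose equivalence of the three conditions (1), (2), (3) packages exactly the information we need about the interaction between $\equiv^{\varepsilon}_{X,\xi}$ and $\equiv^{\iota}_{X,\xi}$.

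For well-definedness, I need to check independence from the two choices in the formula. First, the image should not depend on which nonstandard index $i$ we select: applying the implication $(1)\To(2)$ of \prettyref{lem:MainLem} with $t=s$ (using $s\equiv^{\varepsilon}_{X,\xi}s$ trivially) gives $\ns{s}(i)\equiv^{\iota}_{X,\xi}\ns{s}(j)$ for any $i,j\in\ns{\mathbb{N}}\setminus\mathbb{N}$. Second, the image should not depend on the representative $s$ of the $\varepsilon$-class: if $s\equiv^{\varepsilon}_{X,\xi}t$, then by $(1)\To(2)$ again, $\ns{s}(i)\equiv^{\iota}_{X,\xi}\ns{t}(i)$ for any $i\in\ns{\mathbb{N}}\setminus\mathbb{N}$, so the $\iota$-classes agree.

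For injectivity, suppose $\omega'_{(X,\xi)}[s]_{\equiv^{\varepsilon}}=\omega'_{(X,\xi)}[t]_{\equiv^{\varepsilon}}$, meaning $\ns{s}(i)\equiv^{\iota}_{X,\xi}\ns{t}(i)$ for some (equivalently, any) $i\in\ns{\mathbb{N}}\setminus\mathbb{N}$. The implication $(3)\To(1)$ of \prettyref{lem:MainLem} then yields $s\equiv^{\varepsilon}_{X,\xi}t$, so $[s]_{\equiv^{\varepsilon}}=[t]_{\equiv^{\varepsilon}}$.

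For naturality, let $f\colon(X,\xi)\to(Y,\eta)$ be a morphism in $\mathscr{C}$ and fix $i\in\ns{\mathbb{N}}\setminus\mathbb{N}$. Using that the nonstandard extension respects composition, $\ns{(f\circ s)}(i)=\ns{f}(\ns{s}(i))$, so
\[
(\iota f\circ\omega'_{(X,\xi)})[s]_{\equiv^{\varepsilon}}=[\ns{f}(\ns{s}(i))]_{\equiv^{\iota}}=[\ns{(f\circ s)}(i)]_{\equiv^{\iota}}=(\omega'_{(Y,\eta)}\circ\varepsilon f)[s]_{\equiv^{\varepsilon}},
\]
which is the required commutation. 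There is no real obstacle here; all the substance has already been absorbed into \prettyref{lem:MainLem}, and the theorem is essentially a bookkeeping consequence of that lemma together with the functoriality of $\ast$.
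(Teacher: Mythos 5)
Your proposal is correct and follows the same route as the paper: well-definedness from the implication $\left(1\right)\To\left(2\right)$ of \prettyref{lem:MainLem}, injectivity from $\left(3\right)\To\left(1\right)$, and naturality from the functoriality of the nonstandard extension. You merely spell out the bookkeeping (independence of the index $i$ and of the representative $s$) that the paper leaves implicit.
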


\begin{proof}
The well-definedness follows from the implication $\left(1\right)\To\left(2\right)$
and the injectivity does from the implication $\left(3\right)\To\left(1\right)$
of \prettyref{lem:MainLem}. The naturality is trivial by definition.
\end{proof}
\begin{thm}
\label{thm:omega-split}$\omega=\omega'\circ\left(\varphi\restriction\mathscr{C}\right)$.
\end{thm}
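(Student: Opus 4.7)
My plan is to verify the claimed equality of natural transformations componentwise. Both $\omega$ and $\omega'\circ(\varphi\restriction\mathscr{C})$ are natural transformations $\sigma\restriction\mathscr{C}\To\iota$ whose naturality has already been established in \prettyref{cor:Phi} and \prettyref{thm:omega-prime}, so it suffices to show that for each $(X,\xi)\in\mathscr{C}$ the two component maps $\omega_{(X,\xi)}$ and $\omega'_{(X,\xi)}\circ\varphi_{(X,\xi)}$ from $\sigma(X,\xi)$ to $\iota(X,\xi)$ coincide.

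To check this, I would fix $(X,\xi)\in\mathscr{C}$ and an arbitrary coarse sequence $s\in S(X,\xi)$, and then unfold the definitions on the representative $[s]_{\equiv_{X,\xi}^{\sigma}}$. Applying $\varphi_{(X,\xi)}$ from \prettyref{cor:Phi} replaces the $\sigma$-class by the $\varepsilon$-class of the same underlying sequence $s$; applying $\omega'_{(X,\xi)}$ from \prettyref{thm:omega-prime} then yields $[\ns{s}(i)]_{\equiv_{X,\xi}^{\iota}}$ for any $i\in\ns{\mathbb{N}}\setminus\mathbb{N}$. By definition, this is precisely $\omega_{(X,\xi)}[s]_{\equiv_{X,\xi}^{\sigma}}$, so the two maps agree on every representative and hence on every equivalence class.

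There is essentially no obstacle here: the formulas for $\varphi$, $\omega'$, and $\omega$ have been chosen so that the factorisation holds term-by-term on representative coarse sequences, making the verification a one-line chase through the definitions. All the substantive work has already been done in \prettyref{lem:MainLem}, which underwrites the well-definedness and injectivity of $\omega'$; the present theorem merely certifies that $\omega'$ is indeed the ``missing factor'' between $\varphi$ and $\omega$, so that the injectivity consequences for $\omega$ on $\pMetr$ announced in the introduction follow at once by combining \prettyref{thm:omega-prime} with \prettyref{cor:Phi}.
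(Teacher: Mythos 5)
Your verification is correct and is exactly the definition-chase the paper intends by its one-word proof (``Trivial''): $\varphi_{(X,\xi)}$ sends $[s]_{\equiv^{\sigma}_{X,\xi}}$ to $[s]_{\equiv^{\varepsilon}_{X,\xi}}$, and $\omega'_{(X,\xi)}$ then produces $[\ns{s}(i)]_{\equiv^{\iota}_{X,\xi}}$, which agrees with $\omega_{(X,\xi)}[s]_{\equiv^{\sigma}_{X,\xi}}$ by definition. No further comment is needed.
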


\begin{proof}
Trivial.
\end{proof}
\begin{thm}
\label{thm:MainThm}The map $\omega_{\left(X,\xi\right)}\colon\sigma\left(X,\xi\right)\to\iota\left(X,\xi\right)$
is injective for all metrisable spaces $\left(X,\xi\right)$ in $\mathscr{C}$.
\end{thm}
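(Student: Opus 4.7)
My plan is to combine the three results immediately preceding the statement. By \prettyref{thm:omega-split}, we have the factorisation
\[
\omega_{(X,\xi)} = \omega'_{(X,\xi)} \circ \varphi_{(X,\xi)},
\]
so injectivity of $\omega_{(X,\xi)}$ will follow once both factors are shown to be injective on the relevant domain.

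First, I would invoke \prettyref{thm:omega-prime}, which provides the injectivity of $\omega'_{(X,\xi)}\colon \varepsilon(X,\xi) \to \iota(X,\xi)$ for every pointed coarse space in $\mathscr{C}$, with no metrisability hypothesis needed. Second, I would use the metrisability assumption to handle $\varphi_{(X,\xi)}$: by \prettyref{cor:phi-bijective}, the $\sigma$- and $\varepsilon$-equivalences coincide on a pointed metric space, so $\varphi_{(X,\xi)}$ is literally the identity map and in particular injective. Composing two injective maps yields an injective map, which is precisely the claim.

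There is really no obstacle here, since all the work has been done in \prettyref{lem:MainLem} (whose implication $(3) \To (1)$ supplies the injectivity of $\omega'$) and in \prettyref{fact:E-implies-S} (which reduces $\varphi$ to the identity on $\pMetr$). The only point worth noting is that metrisability enters exactly once, via \prettyref{cor:phi-bijective}, and cannot in general be dropped: for non-metrisable spaces the map $\varphi_{(X,\xi)}$ need not be injective (as already indicated by \prettyref{rem:remark-on-phi}), so this argument does not extend beyond $\pMetr \cap \mathscr{C}$ without further hypotheses.
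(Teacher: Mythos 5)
Your proof is correct and is essentially identical to the paper's, which likewise just combines \prettyref{cor:phi-bijective}, \prettyref{thm:omega-prime} and \prettyref{thm:omega-split}. One small caveat: your closing aside overstates the situation for non-metrisable spaces --- the paper only shows the argument for $\varphi$ breaks down there and leaves its injectivity as an open problem, not that $\varphi$ actually fails to be injective.
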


\begin{proof}
Combine \prettyref{cor:phi-bijective}, \prettyref{thm:omega-prime}
and \prettyref{thm:omega-split}.
\end{proof}
This answers \citep[Problems 5.3 and 5.5]{Ima20a} for metrisable
spaces, and gives a partial answer to \citep[Problem 5.4]{Ima20a}.
\begin{cor}
The map $\omega_{\left(X,\xi\right)}\colon\sigma\left(X,\xi\right)\to\iota\left(X,\xi\right)$
is bijective for all proper geodesic metrisable spaces $\left(X,\xi\right)$
in $\mathscr{C}$.
\end{cor}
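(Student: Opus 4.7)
The plan is to assemble this corollary from two pieces that are already in hand by the time the reader reaches it: Theorem \ref{thm:MainThm} supplies injectivity, and \citep[Theorem 4.2]{Ima20a}, noted explicitly in the discussion preceding the main results, supplies surjectivity. Both inputs are at our disposal, so the proof is essentially a two-line combination and I would present it as such.

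More concretely, I would first remark that every proper geodesic metrisable space is in particular metrisable, so Theorem \ref{thm:MainThm} immediately gives injectivity of $\omega_{\left(X,\xi\right)}$. I would then invoke the cited \citep[Theorem 4.2]{Ima20a} for surjectivity; its argument uses the geodesic hypothesis in an essential way, realising a given $x\in\INF\left(X,\xi\right)$ as $\ns{s}\left(\nu\right)$ for some $\nu\in\ns{\mathbb{N}}\setminus\mathbb{N}$ and some standard coarse sequence $s$ obtained by concatenating unit-speed geodesic segments from $\xi$ to suitable approximants of $x$. This produces a preimage of $\left[x\right]_{\equiv_{X,\xi}^{\iota}}$ under $\omega_{\left(X,\xi\right)}$, so $\omega_{\left(X,\xi\right)}$ is bijective.

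There is no genuine obstacle at this step. The substantive content of the note has been absorbed into Theorem \ref{thm:MainThm} --- which factors $\omega$ through $\varepsilon$ via \prettyref{thm:omega-split} and combines the identification $\varphi_{\left(X,\xi\right)}=\id$ from \prettyref{cor:phi-bijective} with the injectivity of $\omega'$ from \prettyref{thm:omega-prime} --- and into the earlier surjectivity theorem. What remains for this final corollary is merely the observation that proper geodesic metrisability entails the hypotheses of both inputs.
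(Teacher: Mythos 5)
Your proposal is correct and matches the paper's (implicit) argument exactly: injectivity comes from Theorem \ref{thm:MainThm} since proper geodesic metrisable spaces are metrisable, and surjectivity is the previously cited \citep[Theorem 4.2]{Ima20a} for proper geodesic metrisable spaces. Nothing further is needed.
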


\begin{cor}
Let $\left(X,\xi\right)$ be a proper geodesic metrisable space in
$\mathscr{C}$. Then $\sigma\left(X,\xi\right)\cong\varepsilon\left(X,\xi\right)\cong\iota\left(X,\xi\right)\cong\Ends\left(X\right)$.
\end{cor}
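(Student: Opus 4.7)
The plan is to chain together the isomorphisms already established in the preceding results; no substantive new work is required, so the main obstacle is merely bookkeeping.

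First, since $(X,\xi)$ is metrisable, \prettyref{cor:phi-bijective} identifies $\sigma(X,\xi)$ with $\varepsilon(X,\xi)$ via $\varphi_{(X,\xi)}$, which is in fact the identity map. Second, the previous corollary identifies $\sigma(X,\xi)$ with $\iota(X,\xi)$ via $\omega_{(X,\xi)}$, since $(X,\xi)$ is proper geodesic metrisable. For the remaining link $\varepsilon(X,\xi) \cong \iota(X,\xi)$, I will invoke the factorisation $\omega = \omega' \circ (\varphi \restriction \mathscr{C})$ supplied by \prettyref{thm:omega-split}: the right factor $\varphi_{(X,\xi)}$ is bijective by the metrisable case just used, and the composite $\omega_{(X,\xi)}$ is bijective by the proper-geodesic hypothesis, so $\omega'_{(X,\xi)}$ is forced to be bijective. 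Alternatively, \prettyref{thm:omega-prime} already supplies injectivity of $\omega'_{(X,\xi)}$ in full generality, and surjectivity transfers from that of $\omega_{(X,\xi)}$ through the bijective $\varphi_{(X,\xi)}$.

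Finally, the classical identification $\sigma(X,\xi) \cong \Ends(X)$ is exactly Proposition 2.12 of \citet{DLT13}, recalled near the start of the paper, and applies precisely because $(X,\xi)$ is proper geodesic pointed metric. Composing the three bijections yields the stated chain of isomorphisms. The only point worth checking is that the hypotheses line up — properness, geodesicity, and metrisability are each consumed by exactly one of the three identifications — and this is immediate from the statement.
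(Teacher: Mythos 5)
Your proposal is correct and matches the paper's (implicit) argument: the corollary is obtained by chaining the identification $\varphi_{\left(X,\xi\right)}=\id$ from \prettyref{cor:phi-bijective}, the bijectivity of $\omega_{\left(X,\xi\right)}$ for proper geodesic metrisable spaces from the preceding corollary (hence of $\omega'_{\left(X,\xi\right)}$ via \prettyref{thm:omega-split}), and the identification $\sigma\left(X,\xi\right)\cong\Ends\left(X\right)$ from Proposition 2.12 of \citet{DLT13}. Your check that each hypothesis is consumed by exactly one link is the right sanity check, and nothing further is needed.
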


\section{\label{sec:Case-study}Case study}

Throughout this section, we identify a graph with its geometric realisation.
We also assume that $\mathscr{C}$ contains (asymorphic copies of)
all pointed coarse spaces we consider. Let $G:=\left(V,E\right)$
be a locally finite connected graph. The metric $d_{G}\colon V\times V\to\mathbb{R}_{\geq0}$
is defined as usual:
\[
d_{G}\left(v,w\right):=\text{the length of the shortest path between }v\text{ and }w.
\]
This makes the graph $G$ a proper geodesic metric space where each
edge is isometric to the unit interval $\left[0,1\right]$. The vertex
set $V$ is coarsely equivalent to the whole graph $G$, so we obtain
the isomorphisms:
\[
\xymatrix{ & \sigma\left(V,v\right)\ar[d]_{\cong}\ar[r]_{\cong}^{\varphi} & \varepsilon\left(V,v\right)\ar[d]_{\cong}\ar[r]_{\cong}^{\omega'} & \iota\left(V,v\right)\ar[d]_{\cong}\\
\Ends\left(G\right)\ar[r]_{\cong} & \sigma\left(G,v\right)\ar[r]_{\cong}^{\varphi} & \varepsilon\left(G,v\right)\ar[r]_{\cong}^{\omega'} & \iota\left(G,v\right)
}
\]
where the vertical maps are induced by the inclusion $V\hookrightarrow G$.
\begin{thm}
Let $T$ be a finitely branching tree with root $r$. Then $\iota\left(T,r\right)$
is equipotent to the set of all infinite branches of $T$, and so
are $\sigma\left(T,r\right)$, $\varepsilon\left(T,r\right)$ and
$\Ends\left(T\right)$.
\end{thm}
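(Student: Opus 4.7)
The plan is to exhibit a bijection between $\iota(V,r)$ and the set of infinite branches of $T$; combined with the diagram above the statement, this yields the claim for the remaining three invariants. The key enabling observation is that, since $T$ is finitely branching, every level set $V_{n}:=\set{v\in V|d(v,r)=n}$ is finite and hence standard ($\ns{V_{n}}=V_{n}$), so every point of $\ns{V}$ whose distance from $r$ is standard is itself standard. For each $x\in\INF(V,r)$, transfer of the tree axioms supplies a unique internal geodesic $\pi_{x}\colon\set{0,1,\dots,\ns{d}(x,r)}\to\ns{V}$ from $r$ to $x$; I would define $\beta(x)\in V^{\mathbb{N}}$ by $\beta(x)(k):=\pi_{x}(k)$, which lies in $V_{k}\subseteq V$ for every standard $k$ and describes an infinite branch of $T$.

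First I would verify that $\beta$ descends to $\iota(V,r)$, reducing everything to a one-step lemma: if $u,v\in\INF(V,r)$ and $u\sim_{V}v$, then $\beta(u)=\beta(v)$. Letting $w\in\ns{V}$ be the common ancestor of $u$ and $v$ in the internal tree and setting $c:=\ns{d}(r,w)$, the tree identity $\ns{d}(u,v)=\ns{d}(u,w)+\ns{d}(w,v)$ together with the finiteness of $\ns{d}(u,v)$ forces $c\geq\ns{d}(r,u)-\ns{d}(u,v)$ to be infinite, so $\pi_{u}$ and $\pi_{v}$ agree at every index $\leq c$ and a fortiori at every standard index. To propagate this one-step equality along a macrochain $u_{0},\dots,u_{m}$ of possibly hyperfinite length, I fix a standard $k$ and apply internal induction to the internal set
\[
I_{k}:=\set{j\leq m|\pi_{u_{j}}(k)=\pi_{u_{0}}(k)}
\]
of $\ns{\mathbb{N}}$; the one-step lemma shows $I_{k}$ is closed under successor, so $I_{k}=\set{0,1,\dots,m}$ and $\beta(u_{m})(k)=\beta(u_{0})(k)$. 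Since $k$ is arbitrary, $\beta(u_{0})=\beta(u_{m})$.

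Surjectivity of the resulting map $\bar{\beta}\colon\iota(V,r)\to\set{\text{infinite branches of }T}$ is immediate: given an infinite branch $b\colon\mathbb{N}\to V$, the sequence $b$ is itself a coarse sequence and $\beta(\ns{b}(N))=b$ for any $N\in\ns{\mathbb{N}}\setminus\mathbb{N}$. For injectivity, suppose $\beta(x)=\beta(y)$; the internal set $\set{k\leq\min(\ns{d}(x,r),\ns{d}(y,r))|\pi_{x}(k)=\pi_{y}(k)}$ contains every standard $k$, so by overspill it contains some infinite $k'$. Setting $p:=\pi_{x}(k')=\pi_{y}(k')$, the concatenation of the reverse of $\pi_{x}\restriction\set{k',\dots,\ns{d}(x,r)}$ with $\pi_{y}\restriction\set{k',\dots,\ns{d}(y,r)}$ is an internal hyperfinite sequence from $x$ to $y$ whose consecutive members are at tree-distance one and all of whose members sit at level $\geq k'$, hence in $\INF(V,r)$; this macrochain witnesses $x\equiv_{V,r}^{\iota}y$.

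The main obstacle is the second paragraph: one must resist propagating $\beta(u_{j})=\beta(u_{j+1})$ by ordinary induction along the internally indexed chain, and instead phrase the invariant ``coincidence at a fixed standard level $k$'' as an internal predicate on $\set{0,\dots,m}$ so that internal induction, i.e., the transfer of finite induction, becomes applicable.
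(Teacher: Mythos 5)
Your proof is correct and is essentially the paper's argument run in the opposite direction: you build the map from $\iota(T,r)$ to the set of infinite branches via internal geodesics, whereas the paper builds its inverse $\psi$ from branches to $\iota(T,r)$, but the three verifications correspond exactly (your descent-to-$\iota$ step is the paper's injectivity claim via transferred induction along the macrochain, your injectivity is the paper's surjectivity via overspill/saturation to an infinite agreement level followed by a geodesic-tail macrochain, and both rest on the finiteness of the level sets). No gaps; the reversal of direction is cosmetic.
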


\begin{proof}
We denote the set of all infinite branches by $\left[T\right]$. It
suffices to prove that $\left[T\right]\cong\iota\left(T,r\right)$.
Define a map $\psi\colon\left[T\right]\to\iota\left(T,r\right)$ as
follows. Let $f\in\left[T\right]$. Choose an arbitrary $i\in\ns{\mathbb{N}}\setminus\mathbb{N}$.
By transfer, $\ns{f}\left(i\right)\in\ns{T}$ and $d_{T}\left(r,\ns{f}\left(i\right)\right)=i$,
so $\ns{f}\left(i\right)\in\INF\left(T,r\right)$. Now, define $\psi\left(f\right):=\left[\ns{f}\left(i\right)\right]_{\equiv_{T,r}^{\iota}}$,
where the right hand side is independent of the choice of $i$ by
the following fact.
\begin{claim*}
$\ns{f}\left(i\right)\equiv_{T,r}^{\iota}\ns{f}\left(j\right)$ for
all $i,j\in\ns{\mathbb{N}}\setminus\mathbb{N}$.
\end{claim*}

\subsubsection*{Proof}

We may assume without loss of generality that $i\leq j$. The sequence
$\set{\ns{f}\left(k\right)}_{k=i}^{j}$ witnesses that $\ns{f}\left(i\right)\equiv_{T,r}^{\iota}\ns{f}\left(j\right)$.
\begin{claim*}
$\psi$ is surjective.
\end{claim*}

\subsubsection*{Proof}

Let $x\in\INF\left(T,r\right)$. By transfer, there exists a (unique)
hyperfinite sequence $\set{t_{i}}_{i\leq n}$ in $\ns{T}$ such that
$t_{0}=r$, $t_{n}=x$ and $t_{i+1}$ is a child of $t_{i}$ for each
$i<n$. Notice that $\ns{d_{T}}\left(r,t_{i}\right)=i$ and $\ns{d_{T}}\left(r,x\right)=n\in\ns{\mathbb{N}}\setminus\mathbb{N}$.
For each (standard) $i\in\mathbb{N}$, since $T$ is finitely branching,
the set $T_{i}:=\set{t\in T|d_{T}\left(r,t\right)=i}$ is finite,
so $t_{i}\in\ns{T_{i}}=T_{i}$. Hence the map $f\colon\mathbb{N}\to T$
defined by $f\left(i\right):=t_{i}$ is an infinite branch of $T$.

Let us verify that $\psi\left(f\right)=\left[x\right]_{\equiv_{X,\xi}^{\iota}}$.
First note that $\ns{f}\left(i\right)=t_{i}$ holds for all $i\in\mathbb{N}$
by definition. By the countable saturation, $\ns{f}\left(i\right)=t_{i}$
holds for some (infinite) $i\in\ns{\mathbb{N}}\setminus\mathbb{N}$
with $i\leq n$. The sequence $\set{t_{k}}_{k=i}^{n}$ witnesses that
$\ns{f}\left(i\right)\equiv_{T,r}^{\iota}x$. Therefore $\psi\left(f\right)=\left[x\right]_{\equiv_{T,r}^{\iota}}$.
See also the proof of \citep[Theorem 4.2]{Ima20a}.
\begin{claim*}
$\psi$ is injective.
\end{claim*}

\subsubsection*{Proof}

Let $\set{t_{i}}_{i\leq n}$ be a macrochain in $\INF\left(T,r\right)$
and $n\in\mathbb{N}$. Any two adjacent nodes of $\set{t_{i}}_{i\leq n}$
have the same ancestor of level $n$. (Otherwise, the distance $\ns{d_{T}}\left(t_{i},t_{i+1}\right)$
would be infinite, which is a contradiction.) By the transferred induction
principle, all nodes of $\set{t_{i}}_{i\leq n}$ have the same ancestor
of level $n$.

Now, let $f,g\in\left[T\right]$ and suppose $\psi\left(f\right)=\psi\left(g\right)$,
i.e., there exists a macrochain in $\INF\left(T,r\right)$ connecting
$\ns{f}\left(i\right)$ and $\ns{g}\left(i\right)$ for $i\in\ns{\mathbb{N}}\setminus\mathbb{N}$.
All nodes of the macrochain have the same ancestors of standard level.
In particular, $\ns{f}\left(i\right)$ and $\ns{g}\left(i\right)$
have the same ancestors of standard level. In other words, $f\left(j\right)=g\left(j\right)$
for all standard $j\in\mathbb{N}$. Hence $f=g$.\qedhere

\end{proof}
\begin{example}
Let $G$ be a finitely generated group endowed with a finite generating
set $S$. The Cayley graph $\Cay_{S}\left(G\right)$ is locally finite
and connected; hence $\varepsilon\left(G,e_{G}\right)\cong\varepsilon\left(G,e_{G}\right)\cong\iota\left(G,e_{G}\right)\cong\Ends\left(\Cay_{S}\left(G\right)\right)=:\Ends\left(G\right)$.
\begin{enumerate}
\item Consider the Abelian group $\mathbb{Z}$ with the generating set $\set{\pm1}$.
The infinite part $\INF\left(\mathbb{Z},0\right)$ consists of positive
and negative infinite hypernatural numbers. Two infinite points are
$\iota$-equivalent if and only if they have the same sign. To see
the ``only if'' part, let $\set{x_{i}}_{i\leq n}$ be a macrochain
in $\INF\left(\mathbb{Z},0\right)$. The internal set $A:=\set{i\in\ns{\mathbb{N}}|\sgn\left(x_{i}\right)=\sgn\left(x_{0}\right)\text{ or }i>n}$
contains $0$, and is closed under successor. By the transferred induction
principle, $A=\ns{\mathbb{N}}$, i.e., the components of $\set{x_{i}}_{i\leq n}$
have the same sign. On the other hand, the ``if'' part is evident.
Thus we have that $\left|\sigma\left(\mathbb{Z},0\right)\right|=\left|\varepsilon\left(\mathbb{Z},0\right)\right|=\left|\iota\left(\mathbb{Z},0\right)\right|=\left|\Ends\left(\mathbb{Z}\right)\right|=2$.
In fact, the Cayley graph of $\mathbb{Z}$ with respect to $\set{\pm1}$
can be considered as a tree, where the root has exactly two children,
and each node other than the root has exactly one child. Such a tree
has exactly two infinite branches.
\item Let $S$ be a finite set of cardinality $\geq2$ and $F_{S}$ the
free group generated by $S$. The Cayley graph $\Cay_{S\cup S^{-1}}\left(F_{S}\right)$
is a finitely branching tree with the root $e_{F_{S}}$. The root
has $2\left|S\right|$ children and each node other than the root
has $2\left|S\right|-1$ children. So $\Cay_{S\cup S^{-1}}\left(F_{S}\right)$
has exactly $2^{\aleph_{0}}$ infinite branches. Hence $\left|\sigma\left(F_{S},e_{F_{S}}\right)\right|=\left|\varepsilon\left(F_{S},e_{F_{S}}\right)\right|=\left|\iota\left(F_{S},e_{F_{S}}\right)\right|=\left|\Ends\left(F_{S}\right)\right|=2^{\aleph_{0}}$.
\end{enumerate}
\end{example}

The following examples cannot be obtained as a Cayley graph of a finitely
generated group.
\begin{example}
Let $\Sigma:=\set{a,b}$ be an alphabet. The set $\Sigma^{<\omega}$
of all finite words over $\Sigma$ can be considered as a binary tree
with the root $\emptyset$. Similarly to the above example $F_{S}$,
we have that $\left|\sigma\left(\Sigma^{<\omega},\emptyset\right)\right|=\left|\varepsilon\left(\Sigma^{<\omega},\emptyset\right)\right|=\left|\iota\left(\Sigma^{<\omega},\emptyset\right)\right|=\left|\Ends\left(\Sigma^{<\omega}\right)\right|=2^{\aleph_{0}}$.
On the other hand, the subtree
\[
T:=\set{a^{n}b^{m}\in\Sigma^{<\omega}|n,m\in\mathbb{N}}
\]
has exactly $\aleph_{0}$ infinite branches. Hence $\left|\sigma\left(T,\emptyset\right)\right|=\left|\varepsilon\left(T,\emptyset\right)\right|=\left|\iota\left(T,\emptyset\right)\right|=\left|\Ends\left(T\right)\right|=\aleph_{0}$.
This affirmatively answers \citep[Problem 5.2]{Ima20a}, which asks
if $\iota\left(X,\xi\right)$ is countably infinite for some pointed
coarse space $\left(X,\xi\right)$ in $\mathscr{C}$ (provided that
$\mathscr{C}$ is sufficiently rich).
\end{example}

\section{\label{sec:Concluding-remarks}Concluding remarks}

In summary, the coarsely invariant functors $\sigma\restriction\mathscr{C},\varepsilon\restriction\mathscr{C},\iota\colon\mathscr{C}\subseteq\pCoarse\to\Sets$
are related with the following commutative diagram of natural transformations:

\[
\xymatrix{\sigma\restriction\mathscr{C}\ar@{=>}[dr]_{\varphi\restriction\mathscr{C}}\ar@{=>}[rr]^{\omega} &  & \iota\\
 & \varepsilon\restriction\mathscr{C}\ar@{=>}[ur]_{\omega'}
}
\]
where $\varphi\colon\sigma\To\varepsilon$ is bijective on $\pMetr$,
$\omega'$ injective on $\mathscr{C}$, and $\omega$ injective on
$\pMetr\cap\mathscr{C}$. Moreover, $\omega'$ and $\omega$ are bijective
for proper geodesic metrisable spaces in $\mathscr{C}$. This enables
us to calculate, for a large class of spaces, the invariants $\sigma$,
$\varepsilon$ and $\Ends$ in an intuitive way, as we demonstrated
in \prettyref{sec:Case-study}.

Our proof of the injectivity of $\omega$ does not apply to general
pointed coarse spaces, because the proof of the implication from $\equiv_{X,\xi}^{\varepsilon}$
to $\equiv_{X,\xi}^{\sigma}$ depends on the metrisability as we pointed
out in \prettyref{rem:remark-on-phi}. Hence, the following problem
remains open.
\begin{problem}
Is the map $\omega_{\left(X,\xi\right)}\colon\sigma\left(X,\xi\right)\to\iota\left(X,\xi\right)$
injective for all non-metrisable spaces $\left(X,\xi\right)$ in $\mathscr{C}$?
\end{problem}

This is equivalent to the following purely standard problem.
\begin{problem}
Is the map $\varphi_{\left(X,\xi\right)}\colon\sigma\left(X,\xi\right)\to\varepsilon\left(X,\xi\right)$
injective for all non-metrisable spaces $\left(X,\xi\right)$ in $\mathscr{C}$?
\end{problem}

\bibliographystyle{IEEEtranSN}
\bibliography{Injectivity}

\end{document}